\documentclass[1p]{elsarticle}

\usepackage{amsmath,amssymb,amsthm,mathtools}
\usepackage{microtype}
\usepackage{bm}

\journal{Applied Mathematics Letters}

\newtheorem{proposition}{Proposition}
\newtheorem{theorem}{Theorem}

\newcommand{\ii}{\mathrm i}
\newcommand{\cT}{\mathcal T}
\newcommand{\cF}{\mathcal F}
\newcommand{\cX}{\mathcal X}
\newcommand{\avg}[1]{\{\!\{#1\}\!\}}
\newcommand{\jump}[1]{[\![#1]\!]}
\newcommand{\TDG}{\mathrm{TDG}}

\begin{document}
\begin{frontmatter}

\title{The PWDG--UWVF Equivalence Revisited}

\author[1]{Shelvean Kapita}
\address[1]{Department of Mathematics, Texas A\&M University, College Station, TX, USA}

\begin{abstract}
The ultra-weak variational formulation (UWVF) is known to be the Trefftz discontinuous Galerkin scheme obtained from the standard PWDG flux family by setting \(\alpha=\beta=\delta=1/2\). We give a short structural explanation of this fact. On every interior mesh edge, the two PWDG jump terms at these parameter values are exactly one half of the sum of the squared norms of the two oriented impedance-transmission defects used by the UWVF. Thus the Trefftz-DG norm is the natural Riesz norm of the global transmission defect. In PWDG coordinates its Gramian is the anti-Hermitian part of the discrete matrix, and Riesz normalization puts the operator in the normal form \(K+\ii I\), with \(K\) Hermitian. The same argument applied directly to the discrete UWVF yields the complementary normal form \(\tfrac12 I+\ii K_{\rm U}\). This identifies the classical equivalence as a relation between broken-field and impedance-trace coordinates for the same edge-coupling problem, rather than only as a particular choice of penalties.
\end{abstract}

\begin{keyword}
Trefftz methods \sep plane-wave discontinuous Galerkin \sep ultra-weak variational formulation \sep impedance traces \sep Riesz map \sep conditioning
\end{keyword}

\end{frontmatter}

\section{Introduction}

The ultra-weak variational formulation (UWVF) of Cessenat and Despr\'es uses impedance traces on element boundaries as unknowns \cite{CessenatDespres1998,CessenatDespres2003}. Plane-wave discontinuous Galerkin (PWDG), or more generally Trefftz-DG, instead uses broken Helmholtz fields and couples neighboring elements by numerical fluxes. The two methods have long been known to coincide for the particular choice
\begin{equation}\label{eq:half}
        \alpha=\beta=\delta=\frac12;
\end{equation}
see \cite[Remark 2.1]{GittelsonHiptmairPerugia2009} and \cite[Sec. 2.2]{HiptmairMoiolaPerugia2016}. The usual derivation verifies this by inserting the UWVF fluxes into the DG formulation. It does not, however, make transparent why the factors \(1/2\) arise.

The point of this note is that \eqref{eq:half} is forced by a simple impedance identity. The Dirichlet and normal-flux jumps used by PWDG are the two real channels of the complex transmission residual used by the UWVF. At the values \eqref{eq:half}, their weighted squares are exactly the norm of the two oriented impedance defects on each mesh edge. This also gives a direct Riesz interpretation of the Trefftz-DG norm. Since
\begin{equation}\label{eq:coercivity-intro}
        \operatorname{Im} A_h(v,v)=\lVert v\rVert_{\TDG}^2
\end{equation}
for Trefftz fields \cite{HiptmairMoiolaPerugia2016}, the corresponding Gram matrix is the anti-Hermitian part of the PWDG matrix. Its inverse square root therefore produces a normal form canonical relative to the TDG Riesz metric, rather than merely rescaling an ill-conditioned plane-wave basis. The stability identity itself and the ensuing matrix algebra are standard; the new observations here are the edgewise impedance isometry explaining \eqref{eq:half} and the paired PWDG--UWVF Riesz normal forms.

For direct comparison with the classical UWVF literature we use the sign convention \(\partial_nu+\ii ku=g\). Conjugating all impedance traces gives the corresponding formulas for the opposite time convention.

\section{The two formulations}

Let \(\Omega\subset\mathbb R^2\) be polygonal and let \(\cT_h\) be a mesh with interior edges \(\cF_h^I\) and boundary \(\Gamma=\partial\Omega\). We consider
\begin{equation}\label{eq:model}
 -\Delta u-k^2u=0\quad\hbox{in }\Omega,
 \qquad \partial_nu+\ii ku=g\quad\hbox{on }\Gamma.
\end{equation}
For an interior edge \(e=\partial K^+\cap\partial K^-\), with outward normals \(n^\pm\), set
\[
 \jump{v}_N=v^+n^++v^-n^-,\qquad
 \jump{\nabla v}_N=\nabla v^+\!\cdot n^++\nabla v^-\!\cdot n^-,
\]
and use the usual arithmetic averages \(\avg{v}\) and \(\avg{\nabla v}\). Let \(V_h\) be any finite-dimensional broken Trefftz space, so \(-\Delta v-k^2v=0\) on every element.

\subsection{PWDG}
The standard primal Trefftz-DG formulation \cite{GittelsonHiptmairPerugia2009,HiptmairMoiolaPerugia2016} is: find \(u_h\in V_h\) such that
\begin{equation}\label{eq:pwdg}
        A_h(u_h,v_h)=\ell_h(v_h)\qquad\forall v_h\in V_h,
\end{equation}
where, for \(\alpha,\beta>0\) and \(0<\delta<1\),
\begin{align}
A_h(u,v)={}&\int_{\cF_h^I}\!\Big(
 \avg{u}\,\jump{\nabla\bar v}_N
 -\avg{\nabla u}\!\cdot\jump{\bar v}_N
 +\alpha\ii k\,\jump{u}_N\!\cdot\jump{\bar v}_N \notag\\[-1mm]
&\hspace{24mm}-\beta(\ii k)^{-1}\jump{\nabla u}_N\jump{\nabla\bar v}_N\Big)\,ds \notag\\
&+\int_\Gamma\!\Big((1-\delta)\ii k u\bar v+(1-\delta)u\,\overline{\partial_nv}
 -\delta\,\partial_nu\,\bar v \notag\\[-1mm]
&\hspace{29mm}-\delta(\ii k)^{-1}\partial_nu\,\overline{\partial_nv}\Big)\,ds, \label{eq:Ah}
\end{align}
and
\begin{equation}\label{eq:ell}
 \ell_h(v)=\int_\Gamma g\Big((1-\delta)\bar v-\delta(\ii k)^{-1}\overline{\partial_nv}\Big)\,ds.
\end{equation}
The associated Trefftz-DG norm is
\begin{align}
\lVert v\rVert_{\TDG}^2={}&k^{-1}\lVert\beta^{1/2}\jump{\nabla v}_N\rVert_{\cF_h^I}^2
+k\lVert\alpha^{1/2}\jump v_N\rVert_{\cF_h^I}^2 \notag\\
&+k^{-1}\lVert\delta^{1/2}\partial_nv\rVert_\Gamma^2
+k\lVert(1-\delta)^{1/2}v\rVert_\Gamma^2. \label{eq:tdgnorm}
\end{align}
For Trefftz functions, \eqref{eq:coercivity-intro} follows directly from \eqref{eq:Ah}. For the impedance problem \eqref{eq:model}, \eqref{eq:tdgnorm} is a norm on the discrete Trefftz space: zero jumps and zero boundary contribution give a global homogeneous solution, hence zero by uniqueness.

\subsection{UWVF}
Let
\[
        \cX=\prod_{K\in\cT_h}L^2(\partial K).
\]
For \(y_K\in L^2(\partial K)\), let \(e_K\) solve the local adjoint-impedance problem
\[
 -\Delta e_K-k^2e_K=0\ \hbox{in }K,
 \qquad (-\partial_{n_K}+\ii k)e_K=y_K\ \hbox{on }\partial K,
\]
and define the local scattering map
\begin{equation}\label{eq:FK}
        F_Ky_K=(\partial_{n_K}+\ii k)e_K.
\end{equation}
For the unit-impedance boundary condition in \eqref{eq:model}, the UWVF reads \cite{CessenatDespres1998,HiptmairMoiolaPerugia2016}: find \(x\in\cX_h\subset\cX\) such that, for all \(y\in\cX_h\),
\begin{align}
&\sum_{K\in\cT_h}\int_{\partial K}x_K\bar y_K\,ds
-\sum_{K\in\cT_h}\sum_{K'\sim K}
  \int_{\partial K\cap\partial K'}x_{K'}\,\overline{F_Ky_K}\,ds \notag\\
&\hspace{36mm}=\sum_{K\in\cT_h}\int_{\partial K\cap\Gamma}g\,\overline{F_Ky_K}\,ds. \label{eq:uwvf}
\end{align}
Here \(K'\sim K\) denotes the element across an interior edge and
\[
 \cX_h=\{x\in\cX:x_K=(-\partial_{n_K}+\ii k)v_h|_K,\ v_h\in V_h\}.
\]
The local field is recovered from its incoming trace \(x_K\); \(F_Kx_K\) is its outgoing trace. Gittelson, Hiptmair and Perugia showed that \eqref{eq:pwdg}--\eqref{eq:ell} with \(\alpha=\beta=\delta=1/2\) is precisely \eqref{eq:uwvf} written in broken-field variables \cite[Remark 2.1]{GittelsonHiptmairPerugia2009}.

\section{Why the parameters are one half}

Consider one interior edge \(e=\partial K\cap\partial K'\), and abbreviate
\[
 u_K=u|_K,\quad u_{K'}=u|_{K'},\quad
 q_K=\partial_{n_K}u_K,\quad q_{K'}=\partial_{n_{K'}}u_{K'}.
\]
Introduce the normalized incoming and outgoing traces
\begin{equation}\label{eq:gamma}
 \gamma_K^-u=\frac{-q_K+\ii ku_K}{\sqrt{2k}},\qquad
 \gamma_K^+u=\frac{ q_K+\ii ku_K}{\sqrt{2k}}.
\end{equation}
Exact transmission is \(\gamma_K^-u=\gamma_{K'}^+u\) and \(\gamma_{K'}^-u=\gamma_K^+u\). Define the two oriented defects
\[
 r_K=\gamma_K^-u-\gamma_{K'}^+u,
 \qquad r_{K'}=\gamma_{K'}^-u-\gamma_K^+u.
\]

\begin{proposition}[Impedance identity]\label{prop:identity}
For every broken field with well-defined traces on \(e\),
\begin{equation}\label{eq:identity}
 \frac12\bigl(\lVert r_K\rVert_e^2+\lVert r_{K'}\rVert_e^2\bigr)
 =\frac1{2k}\lVert q_K+q_{K'}\rVert_e^2
  +\frac{k}{2}\lVert u_K-u_{K'}\rVert_e^2.
\end{equation}
Thus the interior part of \eqref{eq:tdgnorm} for \(\alpha=\beta=1/2\) is exactly one half of the squared UWVF transmission defect.
\end{proposition}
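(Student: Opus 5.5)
The plan is a direct pointwise computation on \(e\), followed by integration over the edge. I would first write both oriented defects in terms of just two quantities: the flux sum \(a:=q_K+q_{K'}\) and the Dirichlet difference \(b:=u_K-u_{K'}\). Substituting \eqref{eq:gamma} into the definitions of \(r_K\) and \(r_{K'}\) gives
\[
 r_K=\frac{-(q_K+q_{K'})+\ii k(u_K-u_{K'})}{\sqrt{2k}}=\frac{-a+\ii k b}{\sqrt{2k}},
 \qquad
 r_{K'}=\frac{-(q_{K'}+q_K)+\ii k(u_{K'}-u_K)}{\sqrt{2k}}=\frac{-a-\ii k b}{\sqrt{2k}}.
\]
So the two oriented defects are the same combination of the two real channels \(a\) and \(b\), with opposite sign on the impedance part.

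Next I would apply the parallelogram law pointwise on \(e\) to the complex numbers \(-a\) and \(\ii kb\):
\[
 \lvert -a+\ii kb\rvert^2+\lvert -a-\ii kb\rvert^2=2\lvert a\rvert^2+2k^2\lvert b\rvert^2 .
\]
This is where the potential obstacle sits. The mixed terms \(\pm 2\operatorname{Re}(-a\,\overline{\ii kb})\) might look as if they need \(a\) and \(b\) to be real, or the Trefftz property, to vanish. They do not: they carry opposite signs in the two defects and cancel identically for arbitrary complex traces. This is why the identity holds for every broken field with well-defined traces, with no Trefftz assumption. Dividing by \(2k\), integrating over \(e\), and multiplying by \(\tfrac12\) gives
\[
 \tfrac12\bigl(\lVert r_K\rVert_e^2+\lVert r_{K'}\rVert_e^2\bigr)=\tfrac1{2k}\lVert a\rVert_e^2+\tfrac k2\lVert b\rVert_e^2,
\]
which is \eqref{eq:identity}.

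For the final sentence of the proposition, I would observe that \(n^{K'}=-n^{K}\) on \(e\). Hence \(\lvert\jump{u}_N\rvert=\lvert u_K-u_{K'}\rvert\) and \(\jump{\nabla u}_N=q_K+q_{K'}\). Setting \(\alpha=\beta=\tfrac12\) in the interior terms of \eqref{eq:tdgnorm}, the contribution of edge \(e\) is exactly the right-hand side of \eqref{eq:identity}. Summing over \(e\in\cF_h^I\) then identifies the interior part of the TDG norm with one half of the total squared transmission defect.
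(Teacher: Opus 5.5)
Your proposal is correct and matches the paper's proof: you write both defects in terms of $a=q_K+q_{K'}$ and $b=u_K-u_{K'}$, apply the parallelogram identity $|x+y|^2+|x-y|^2=2|x|^2+2|y|^2$ pointwise with $x=-a$, $y=\ii kb$, and integrate over $e$. Your check that $\jump{\nabla u}_N=q_K+q_{K'}$ and $|\jump{u}_N|=|u_K-u_{K'}|$ correctly supplies the identification with the interior TDG terms, which the paper leaves implicit.
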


\begin{proof}
From \eqref{eq:gamma},
\[
 r_K=\frac{-(q_K+q_{K'})+\ii k(u_K-u_{K'})}{\sqrt{2k}},\quad
 r_{K'}=\frac{-(q_K+q_{K'})-\ii k(u_K-u_{K'})}{\sqrt{2k}}.
\]
Use \(|a+\ii b|^2+|a-\ii b|^2=2|a|^2+2|b|^2\), valid for complex \(a,b\), and integrate over \(e\).
\end{proof}

The boundary factor is equally explicit. For either element-boundary trace in \eqref{eq:gamma},
\begin{equation}\label{eq:boundary-half}
 \frac12\bigl(\lVert\gamma^-v\rVert_\Gamma^2+\lVert\gamma^+v\rVert_\Gamma^2\bigr)
 =\frac1{2k}\lVert\partial_nv\rVert_\Gamma^2+\frac{k}{2}\lVert v\rVert_\Gamma^2,
\end{equation}
which is exactly the boundary contribution in \eqref{eq:tdgnorm} for $\delta=1/2$. More generally, replacing \eqref{eq:gamma} by $-\sqrt{\beta/k}\,q_K+\ii\sqrt{\alpha k}\,u_K$ and its outgoing counterpart produces the jump weights $\beta/k$ and $\alpha k$; thus $\alpha/\beta$ selects the effective impedance $k\sqrt{\alpha/\beta}$.

\section{Riesz normal forms}

Let \(\{\Phi_j\}_{j=1}^N\) be a basis of \(V_h\), and define the PWDG matrix and DG Gram matrix by
\[
 A_{j\ell}=A_h(\Phi_\ell,\Phi_j),\qquad
 G_{j\ell}=(\Phi_\ell,\Phi_j)_{\TDG}.
\]
Thus, for \(v=\sum_jc_j\Phi_j\), \(c^*Ac=A_h(v,v)\) and \(c^*Gc=\lVert v\rVert_{\TDG}^2\).

\begin{theorem}[PWDG Riesz normal form]\label{thm:normal}
Assume that \(G\) is positive definite on the discrete Trefftz space. Then
\begin{equation}\label{eq:Ganti}
        G=\frac{A-A^*}{2\ii}.
\end{equation}
With \(B=G^{-1/2}\),
\begin{equation}\label{eq:normal}
        B^*AB=K+\ii I,\qquad K=K^*.
\end{equation}
Hence the Riesz-normalized PWDG matrix is normal and its spectrum lies on \(\operatorname{Im}z=1\).
\end{theorem}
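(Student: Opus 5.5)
The plan is to turn the scalar identity \eqref{eq:coercivity-intro} into a matrix identity by polarization, and then read off \eqref{eq:normal} by a congruence transformation.

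\textbf{Step 1: the identity $\operatorname{Im}A_h(v,v)=\lVert v\rVert_{\TDG}^2$ for Trefftz fields.} Take $v\in V_h$ and consider \eqref{eq:Ah} with $u=v$.

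The two penalty terms contribute $\ii\alpha k\lVert\jump v_N\rVert^2$ and $\ii\beta k^{-1}\lVert\jump{\nabla v}_N\rVert^2$, since $-(\ii k)^{-1}=\ii/k$. The boundary penalty terms contribute analogously.

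For the consistency terms, I integrate by parts elementwise. Because $\Delta v=-k^2v$ on each element,
\[
\int_{\partial K}\partial_n v\,\bar v\,ds=\int_K\bigl(|\nabla v|^2-k^2|v|^2\bigr)\,dx ,
\]
which is real. Rewriting $\sum_K\int_{\partial K}\partial_nv\,\bar v$ with the DG magic formula gives
\[
\int_{\cF_h^I}\bigl(\avg{\nabla v}\cdot\jump{\bar v}_N+\jump{\nabla v}_N\avg{\bar v}\bigr)+\int_\Gamma\partial_nv\,\bar v ,
\]
and this whole expression is real.

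The interior consistency part of $A_h(v,v)$ is $\avg{v}\jump{\nabla\bar v}_N-\avg{\nabla v}\cdot\jump{\bar v}_N$. Its first piece is the complex conjugate of $\jump{\nabla v}_N\avg{\bar v}$. Combining this with the boundary terms $(1-\delta)v\,\overline{\partial_nv}-\delta\,\partial_nv\,\bar v$ and the real quantity above, the consistency terms reduce to a real number plus conjugate pairs. Hence their imaginary part vanishes, and \eqref{eq:coercivity-intro} follows. This bookkeeping is the only real obstacle. The paper records it as following directly from \eqref{eq:Ah}, so I would cite it and only sketch the cancellation.

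\textbf{Step 2: polarization to obtain \eqref{eq:Ganti}.} For $v=\sum_jc_j\Phi_j$ we have $c^*Ac=A_h(v,v)$, so
\[
c^*Gc=\operatorname{Im}(c^*Ac)=\frac{c^*Ac-\overline{c^*Ac}}{2\ii}=c^*\,\frac{A-A^*}{2\ii}\,c .
\]
The matrices $G$ and $(A-A^*)/(2\ii)$ are both Hermitian. Two Hermitian matrices with equal quadratic forms on all of $\mathbb C^N$ coincide, by the complex polarization identity. This proves \eqref{eq:Ganti}.

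\textbf{Step 3: normal form.} Since $G$ is Hermitian positive definite, $B=G^{-1/2}$ exists and is Hermitian positive definite. Split $A=H+\ii G$ with $H=(A+A^*)/2$ Hermitian. Then
\[
B^*AB=BHB+\ii\,BGB=K+\ii I,\qquad K:=BHB ,
\]
and $K$ is Hermitian.

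A matrix of the form $K+\ii I$ commutes with its adjoint $K-\ii I$, so it is normal. Its eigenvalues are $\lambda+\ii$ with $\lambda$ real, since $K$ is Hermitian. Hence the spectrum lies on the line $\operatorname{Im}z=1$.
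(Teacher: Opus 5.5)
Your proposal is correct and takes the same route as the paper. Elementwise Green's identity makes the consistency terms real, so $\operatorname{Im}A_h(v,v)=\lVert v\rVert_{\TDG}^2$; polarization then gives \eqref{eq:Ganti}, and the splitting $A=H+\ii G$ with congruence by $G^{-1/2}$ yields $K+\ii I$, where your magic-formula bookkeeping just spells out the cancellation the paper's proof leaves implicit.
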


\begin{proof}
For the sign convention in \eqref{eq:model}, elementwise Green's identity gives \(\int_{\partial K}v\,\overline{\partial_{n_K}v}\,ds=\int_K(|\nabla v|^2-k^2|v|^2)\,dx\in\mathbb R\); hence \(\operatorname{Im}A_h(v,v)=\lVert v\rVert_{\TDG}^2\) with the positive sign used here. Equation \eqref{eq:Ganti} then follows by polarization. Writing \(H=(A+A^*)/2\) gives \(A=H+\ii G\), so
\[
 B^*AB=B^*HB+\ii B^*GB=K+\ii I,
\]
with \(K=B^*HB\) Hermitian.
\end{proof}

The normal form is basis independent in the following precise sense. If \(Hx=\mu Gx\), then the \(\mu\)'s are unchanged by any nonsingular change of Trefftz basis, while the normalized eigenvalues are \(\mu+\ii\) and the singular values are \(\sqrt{1+\mu^2}\). Thus the raw \(\kappa_2(A)\), which can vary drastically under rescaling or a nearly redundant plane-wave dictionary, is separated from the intrinsic conditioning of the variational operator in its DG Riesz metric. If tiny eigenvalues of \(G\) are truncated before applying \(G^{-1/2}\), the same normal form holds on the retained numerical range.

The corresponding statement for the discrete UWVF is complementary and does not require exact invariance of the trace space under the continuous scattering maps. After orthonormalizing the discrete impedance-trace metric, write
\begin{equation}\label{eq:AU}
        A_{\rm U}=I-U_h,
\end{equation}
where $U_h$ is the discrete propagation-and-edge-exchange operator. For the absorbing problem \eqref{eq:model}, and also for an orthogonal compression of a lossless scattering operator, $U_h$ is contractive. Therefore
\begin{equation}\label{eq:GU}
 G_{\rm U}:=A_{\rm U}+A_{\rm U}^*
 =(I-U_h)^*(I-U_h)+(I-U_h^*U_h)\ge0.
\end{equation}
Whenever $G_{\rm U}>0$, $B_{\rm U}=G_{\rm U}^{-1/2}$ gives
\begin{equation}\label{eq:Uform}
 B_{\rm U}^*A_{\rm U}B_{\rm U}
 =\frac12I+\ii K_{\rm U},\qquad K_{\rm U}=K_{\rm U}^*.
\end{equation}
The first term in \eqref{eq:GU} measures transmission mismatch and the second measures dissipation caused by the absorbing boundary or by discrete compression. In the lossless invariant case $U_h$ is unitary, the second term vanishes, and $U_hq=e^{\ii\theta}q$ implies $G_{\rm U}q=4\sin^2(\theta/2)q$.

\section{A small numerical check}

We use the unit square divided into two triangles, $k=4$, and $p$ equispaced plane waves on each element, centered at its centroid. All edge integrals use 80-point Gauss--Legendre quadrature. The PWDG matrix is assembled from \eqref{eq:Ah} with $\alpha=\beta=\delta=1/2$. Independently, the UWVF matrix is assembled from the normalized traces \eqref{eq:gamma}: the diagonal blocks are incoming-trace Gram matrices, while an interior-edge block pairs the neighboring incoming trace with the outgoing test trace. Set
\[
 \widehat A_{\rm P}=G_{\rm P}^{-1/2}A_{\rm P}G_{\rm P}^{-1/2},\qquad
 \widehat A_{\rm U}=G_{\rm U}^{-1/2}A_{\rm U}G_{\rm U}^{-1/2}.
\]
\begin{table}[h]
\centering
\small
\caption{Raw and Riesz-normalized condition numbers.}
\label{tab:conds}
\begin{tabular}{rcccc}
\hline
$p$ & $\kappa_2(A_{\rm P})$ & $\kappa_2(\widehat A_{\rm P})$ & $\kappa_2(A_{\rm U})$ & $\kappa_2(\widehat A_{\rm U})$\\
\hline
5  & $4.18$              & $1.192$ & $4.18$              & $1.192$\\
9  & $1.81\!\times\!10^{2}$ & $1.374$ & $1.81\!\times\!10^{2}$ & $1.374$\\
13 & $9.12\!\times\!10^{4}$ & $1.755$ & $9.12\!\times\!10^{4}$ & $1.755$\\
17 & $3.05\!\times\!10^{8}$ & $2.379$ & $3.05\!\times\!10^{8}$ & $2.379$\\
\hline
\end{tabular}
\end{table}
With the normalization \eqref{eq:gamma}, the independently assembled matrices satisfy
\[
 \frac{\|A_{\rm U}+\ii A_{\rm P}\|_F}{\|A_{\rm U}\|_F}\le4.5\times10^{-16},\qquad
 \frac{\|G_{\rm U}-2G_{\rm P}\|_F}{\|G_{\rm U}\|_F}\le4.2\times10^{-16}
\]
over all four rows. Thus the computation verifies the equivalence at $1/2$ to roundoff, while the raw condition number grows by eight orders of magnitude and the Riesz-normalized condition number remains below $2.4$.

\section{Discussion}

Proposition~\ref{prop:identity} and \eqref{eq:boundary-half} explain all three classical values $\alpha=\beta=\delta=1/2$: the separate value and normal-flux channels of PWDG become, up to the fixed factor $1/2$, an isometric representation of the oriented impedance defects used by UWVF. The methods therefore describe the same edge-coupling problem in broken-field and impedance-trace coordinates.

The Riesz normal forms sharpen this interpretation. In PWDG coordinates the skew-Hermitian part is $\ii G_{\rm P}$ and gives $K+\ii I$; in discrete UWVF coordinates the Hermitian defect metric $G_{\rm U}=A_{\rm U}+A_{\rm U}^*$ gives $\tfrac12I+\ii K_{\rm U}$. This is related to operator preconditioning \cite{Hiptmair2006} and local UWVF boundary orthonormalization \cite{HuttunenMonkKaipio2002}, but the PWDG Riesz matrix here is assembled directly from the jump and boundary terms already present in the TDG stability norm.

Under a nonsingular Trefftz basis change $S$, $A\mapsto S^*AS$ and $G\mapsto S^*GS$, so raw matrix conditioning depends on the representation while the generalized pencil $(H,G)$ and the Riesz-normalized spectrum do not. If $G=V\Lambda V^*$ has tiny eigenvalues, retaining only $V_r$ with $\lambda_j\ge\tau\lambda_{\max}$ and using $B_\tau=V_r\Lambda_r^{-1/2}$ gives the same normal form for the compressed pencil on the retained subspace. This separates nearly invisible Trefftz combinations from intrinsic conditioning of the discrete operator.

\end{document}